\definecolor{webgreen}{RGB}{0,0,1}
\definecolor{recrown}{RGB}{1,.2,.6}
\begin{document}
\newtheorem{theorem}{Theorem}
\newtheorem{corollary}[theorem]{Corollary}
\newtheorem{lemma}[theorem]{Lemma}
\theoremstyle{definition}
\newtheorem{example}{Example}
\newtheorem*{examples}{Examples}
\newtheorem*{notation}{Notation}
\newtheorem*{remark}{Remark}
\theoremstyle{theorem}
\newtheorem{thmx}{Theorem}
\renewcommand{\thethmx}{\text{\Alph{thmx}}}
\newtheorem{lemmax}[thmx]{Lemma}
\renewcommand{\thelemmax}{\text{\Alph{lemmax}}}
\leftmargin=.5in
\rightmargin=0.5in
\theoremstyle{definition}
\newtheorem*{definition}{Definition}
\title[]{\bf Some factorization results on polynomials having integer coefficients}
\author{Jitender Singh$^{1}$ {\large \orcidlink{0000-0003-3706-8239}}}
\address[1,2]{Department of Mathematics,
Guru Nanak Dev University, Amritsar-143005, India\linebreak
 {\tt sonumaths@gmail.com}}
\author{Rishu Garg$^{2}$}
\date{}
\parindent=0cm
\footnotetext[1]{Corresponding Author email(s): {\tt sonumaths@gmail.com; jitender.math@gndu.ac.in}\\

2010MSC: {Primary 12E05; 30C15; 11C08; 26C10; 26D05; 30A10}\\

\emph{Keywords}: Polynomial factorization, Eisenstein's irreducibility criterion, Perron's irreducibility criterion.
}
\maketitle
\begin{abstract}
In this article, we prove some factorization results for several classes of polynomials having integer coefficients, which in particular yield several classes of irreducible polynomials. Such classes of polynomials are devised by imposing some sufficiency conditions on their coefficients along with some conditions on the prime factorization of constant term or the leading coefficient of the underlying polynomials in conjunction with the information about their root location.
\end{abstract}
\section{Introduction.}
The irreducibility criteria due to Sch\"onemnn \cite{S}, Eisenstein \cite{E}, Perron \cite{Perron}, and Dumas \cite{D} are among the most popular classical results concerning irreducibility of polynomials having integer coefficients \cite{Cox}. The irreducibility criteria due to Sch\"onemnn and Eisenstein are easy consequences of the much more general factorization result of Dumas based on Newton-polygon approach. There have been many extensions and generalizations of the aforementioned irreducibility criteria for polynomials having integer coefficients, and the literature is vast; so for a quick survey, the reader may refer to see \cite{SKJS2023}.

In 2013, Weintraub proved the following generalization of the Eisenstein's irreducibility criterion which also rectifies the false claim made by Eisenstein himself in \cite{E}.
\begin{thmx}[Weintraub \cite{W}]\label{th:A}
    Let $f=a_0+ a_{1}z+\cdots+a_n z^m\in \Bbb{Z}[z]$ be a polynomial and suppose there is a prime  $p$ such that $p$ does not divide $a_m$, $p$ divides $a_i$ for $i=0,\ldots,m-1$, and for some $k$ with $0\leq k\leq m-1$, $p^{2}$ does not divide $a_k$. Let $k_0$ be the smallest such value of $k$. If $f(z)=g(z)h(z)$, a factorization in $\mathbb{Z}[z]$, then $\min\{\deg(g),\deg(h)\}\leq k_0$. In particular, for a primitive polynomial $f$, if $k_0=0$, then $f$ is irreducible, and if $k_0=1$ and $f$ does not have a root in $\mathbb{Q}$, then $f$ is irreducible.
\end{thmx}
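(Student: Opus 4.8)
The plan is to argue by reduction modulo $p$ together with a careful inspection of a single coefficient of the product. Writing $g = b_0 + b_1 z + \cdots + b_r z^r$ and $h = c_0 + c_1 z + \cdots + c_s z^s$ with $r + s = m$, I would first reduce the identity $f = gh$ modulo $p$. Since $p \mid a_i$ for all $i \le m-1$ while $p \nmid a_m$, the reduction $\bar f$ in $\mathbb{F}_p[z]$ equals $\bar a_m\, z^m$, a nonzero constant times a pure power of $z$. Because $\mathbb{F}_p[z]$ is a unique factorization domain in which $z$ is prime, and because $b_r c_s = a_m$ forces $p \nmid b_r$ and $p \nmid c_s$ (so $\deg \bar g = r$ and $\deg \bar h = s$), the factors must themselves be monomials: $\bar g = \bar b_r\, z^r$ and $\bar h = \bar c_s\, z^s$. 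Reading this off coefficientwise yields $p \mid b_i$ for every $i < r$ and $p \mid c_j$ for every $j < s$.

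With this divisibility in hand, the core of the argument is a contradiction built on the coefficient $a_{k_0}$. Assume without loss of generality that $r = \min\{\deg g, \deg h\} \le s$, and suppose for contradiction that $r > k_0$, i.e. $r \ge k_0 + 1$. Expanding the convolution gives $a_{k_0} = \sum_{i+j = k_0} b_i c_j$. For each index pair here one has $i \le k_0 < r$, so $p \mid b_i$; and simultaneously $j = k_0 - i \le k_0 < r \le s$, so $p \mid c_j$. Hence every summand is divisible by $p^2$, which forces $p^2 \mid a_{k_0}$. This contradicts the defining property of $k_0$, namely $p^2 \nmid a_{k_0}$. Therefore $r \le k_0$, which is precisely $\min\{\deg g, \deg h\} \le k_0$.

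I expect the only delicate point to be ensuring that \emph{both} factors in each product $b_i c_j$ are divisible by $p$; this is exactly where the minimal choice of $k_0$ and the ordering $r \le s$ enter, since together they guarantee $k_0 < r \le s$, placing both indices strictly below the respective leading positions. The reduction step is routine once one recognizes that $\bar f$ is a monomial, so it is this coefficient bookkeeping that carries the real content of the result.

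Finally, the two ``in particular'' assertions follow at once for a primitive $f$. If $k_0 = 0$, then any factorization forces a factor of degree $0$; primitivity makes such a constant factor a unit $\pm 1$, so $f$ is irreducible. If $k_0 = 1$, then any factorization has a factor of degree at most $1$; a degree-$1$ factor over $\mathbb{Z}$ would produce a root of $f$ in $\mathbb{Q}$, which is excluded by hypothesis, so once more the factor has degree $0$ and is a unit, whence $f$ is irreducible.
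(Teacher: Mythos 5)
Your proof is correct, but note that this paper contains no proof of Theorem~\ref{th:A} to compare against: the statement is quoted as background from Weintraub's paper \cite{W}, and the authors never prove it themselves (the only result in the introduction they do prove is Theorem~\ref{th:B}). Judged on its own, your argument is complete and is essentially the standard one (and, in substance, Weintraub's original): reduce $f=gh$ modulo $p$, use that $\bar f=\bar a_m z^m$ and that $z$ is prime in $\mathbb{F}_p[z]$ to force $\bar g=\bar b_r z^r$ and $\bar h=\bar c_s z^s$, then derive $p^2\mid a_{k_0}$ from the convolution formula if both factors had degree exceeding $k_0$. The coefficient bookkeeping is right: with $r\le s$ and $r>k_0$, every term $b_ic_j$ in $a_{k_0}=\sum_{i+j=k_0}b_ic_j$ has $i\le k_0<r$ and $j\le k_0<r\le s$, so each term is divisible by $p^2$. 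One small remark on your commentary: the \emph{minimality} of $k_0$ is not actually used anywhere in the contradiction --- the argument goes through verbatim for any index $k$ with $p^2\nmid a_k$, yielding $\min\{\deg g,\deg h\}\le k$; choosing the smallest such $k$ merely gives the sharpest bound, which is how the theorem is phrased. The two ``in particular'' deductions from primitivity are handled correctly. It is also worth observing that your mod-$p$ technique is close in spirit to the paper's proof of its own Theorem~\ref{th:1}, which replaces the reduction argument by a recursive divisibility analysis of the coefficients $b_i$ together with Lemma~\ref{L1}; that route is needed there because the hypotheses involve $p^k$ rather than $p$, so a straight reduction modulo $p$ no longer pins down the shape of the factors.
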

The irreducibility criterion of Eisenstein is precisely the case $k=0$ of Theorem \ref{th:A}, and such results are based on factorization properties of the coefficients of the given polynomial with respect to a prime divisor. 
On the other hand, the irreducibility criterion due to Perron is based on root location of the underlying polynomial. More precisely, the irreducibility criterion of Perron states that if the coefficients of the monic polynomial $f=z^m+\sum_{i=0}^{m-1}a_iz^i \in \mathbb{Z}[z]$, $m\geq 2$  satisfy
\begin{eqnarray*}
|a_{m-1}| &>& 1+|a_{m-2}|+|a_{m-3}|+\cdots+|a_1|+|a_0|,
\end{eqnarray*}
then $f$ is irreducible in $\mathbb{Z}[z]$. The aforementioned inequality enforces all but one zero of $f$ to lie inside the open unit disk $|z|<1$ in the complex plane, and the remaining zero to lie outside the closed unit disk $|z|\leq 1$. This information about the zeros of $f$ can be utilized in order to deduce the irreducibility of $f$. Interestingly, the irreducibility criterion of Perron itself implies the following extension for non-monic polynomials as follows.
\begin{thmx}\label{th:B}
Let $f = a_0 + a_1z +\cdots+a_m z^m\in \Bbb{Z}[z]$ be a primitive polynomial with $a_0a_m\neq 0$ and $m\geq 2$, such that
\begin{eqnarray*}
|a_{m-1}|&>& 1+ |a_0||a_m|^{m-1}+|a_1||a_m|^{m-2}+\cdots+|a_{m-2}||a_m|.
\end{eqnarray*}
Then $f$ is irreducible in $\mathbb{Z}[z]$.
\end{thmx}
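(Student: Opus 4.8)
The plan is to reduce Theorem \ref{th:B} to the monic case of Perron's criterion stated in the text, by means of a substitution that turns $f$ into a monic polynomial whose Perron inequality is \emph{exactly} the assumed hypothesis. Since $a_m\neq 0$, I would set
\[
g(z) = a_m^{m-1}\, f(z/a_m) = \sum_{i=0}^{m} a_i\, a_m^{\,m-1-i}\, z^i .
\]
Writing $b_i = a_i\, a_m^{\,m-1-i}$, one has $b_m = a_m\cdot a_m^{-1} = 1$, so $g$ is monic; moreover $m-1-i\ge 0$ for every $i\le m-1$, whence each $b_i\in\mathbb{Z}$ and therefore $g\in\mathbb{Z}[z]$. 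The coefficient just below the top is $b_{m-1}=a_{m-1}$.

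Next I would verify that the hypothesis is literally Perron's inequality for $g$. Indeed $|b_{m-1}|=|a_{m-1}|$, while for $0\le i\le m-2$ one has $|b_i|=|a_i|\,|a_m|^{\,m-1-i}$, so
\[
1 + \sum_{i=0}^{m-2}|b_i| = 1 + |a_0||a_m|^{m-1} + |a_1||a_m|^{m-2} + \cdots + |a_{m-2}||a_m|,
\]
which is precisely the right-hand side of the assumed inequality. Thus the hypothesis says exactly that $|b_{m-1}| > 1 + |b_{m-2}| + \cdots + |b_0|$, and Perron's criterion applies to the monic polynomial $g$, yielding that $g$ is irreducible in $\mathbb{Z}[z]$, and hence, being monic, over $\mathbb{Q}$.

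Finally I would transfer irreducibility from $g$ back to $f$. From the definition, substituting $z\mapsto a_m z$ gives the identity $g(a_m z) = a_m^{m-1} f(z)$. Since $a_m\neq 0$, the map $z\mapsto a_m z$ is an invertible linear change of variable on $\mathbb{Q}[z]$ and therefore preserves irreducibility; as $g$ is irreducible over $\mathbb{Q}$, so is $g(a_m z)$, and hence so is $a_m^{m-1} f(z)$. Dividing out the nonzero constant $a_m^{m-1}$, we conclude that $f$ is irreducible over $\mathbb{Q}$, and since $f$ is primitive, Gauss's lemma gives irreducibility in $\mathbb{Z}[z]$.

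I expect the only genuinely delicate points to be bookkeeping rather than conceptual: confirming that every $b_i$ lands in $\mathbb{Z}$ and that $g$ is exactly monic, so that Perron's monic hypothesis is met, and checking that the inequality matches term by term. The transfer step is routine once the identity $g(a_m z)=a_m^{m-1}f(z)$ is recorded and Gauss's lemma is invoked via primitivity; the roles of $m\ge 2$ and $a_0 a_m\neq 0$ are simply to guarantee that $g$ genuinely has degree $m$ with nonzero constant term, so that Perron is applicable.
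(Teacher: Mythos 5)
Your proposal is correct and is essentially identical to the paper's own proof: the same substitution $g(z)=a_m^{m-1}f(z/a_m)$, the same verification that the hypothesis is exactly Perron's inequality for the monic polynomial $g$, and the same transfer of irreducibility back to $f$. The only difference is that you spell out the transfer step (via the invertible substitution $z\mapsto a_m z$ over $\mathbb{Q}$ and Gauss's lemma with primitivity), which the paper simply asserts as ``$f$ and $g$ have the same number of irreducible factors in $\mathbb{Z}[z]$.''
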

\begin{proof}
Define $g(z)=a_m^{m-1}f(z/a_m)$. It follows that $g\in \mathbb{Z}[z]$. Further, the polynomials $f$ and $g$ have the same number of irreducible factors in $\mathbb{Z}[z]$. In fact we have
\begin{eqnarray*}
g=a_0a_{m}^{m-1}+a_1a_{m}^{m-2}z+\cdots+a_{m-2}a_m z^{m-2}+a_{m-1} z^{m-1}+z^m,
\end{eqnarray*}
which shows that $g$ is a monic polynomial over $\mathbb{Z}$. By the hypothesis, the coefficients of $g$ satisfy 
$|a_{m-1}|>1+|a_0a_m^{m-1}|+ |a_1a_{m}^{m-2}|+\cdots+|a_{m-2}a_m|$. 
Consequently, by the classical irreducibility criterion of Perron, the polynomial $g$ is irreducible in  $\mathbb{Z}[z]$, and thus so does the polynomial $f$.
\end{proof}
Perron \cite{Perron} gave a multivariate version of his irreducibility criterion for polynomials over arbitrary fields, whose detailed proof may be found in \cite{Bonciocat2}. As a generalization of the irreducibility criterion of Perron,  the authors in \cite{JRG2023} proved a factorization result for a class of polynomials having integer coefficients.

In this article, we obtain factorization results for polynomials having integer coefficients satisfying the conditions  analogous to that of the conditions analogous to those of Eisenstein's irreducibility criterion which depend upon prime factorization of the coefficients of the underlying polynomial, or the irreducibility criterion of Perron which are purely based on root location, or the conditions on the prime factorization of the constant term or the leading coefficient in connection with the root location. Our main results are stated  in Sec.~\ref{Sec:2}, where at the end some examples based on the main results are discussed. Finally, the proofs are presented in Sec.~\ref{Sec:3}.  
\section{Main results}\label{Sec:2}
Analogous to Theorem \ref{th:A}, we have another generalization of Eisenstein's irreducibility criterion in a somewhat different setup as follows:
\begin{theorem}\label{th:1}
    Let $f=a_0+ a_{1}z+\cdots+a_m z^m\in \Bbb{Z}[z]$ be a polynomial and suppose there exists a prime  $p$ and  coprime positive integers $k$ and $j$ with $1\leq j\leq m$ such that  $p$ does not divide $a_j$, $p^k$ divides $a_i$ for $i=0,\ldots,j-1$, and $p^{k+1}$ does not divide $a_0$. Then any factorization $f(z)=g(z)h(z)$ of $f$ in $\mathbb{Z}[z]$ has a factor of degree less than or equal to $k_0$, where $k_0\in \{0,m-j\}$. In particular, for a primitive polynomial $f$,  if $j=m$, then $f$ is irreducible, and if $j=m-1$ and $f$ does not have a root in $\mathbb{Q}$, then $f$ is irreducible.
 \end{theorem}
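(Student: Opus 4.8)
The plan is to pass to the $p$-adic setting and analyze the factorization through the $p$-adic valuations of the roots of $f$, encoded by its Newton polygon. Let $v_p$ denote the $p$-adic valuation on $\mathbb{Q}_p$, normalized by $v_p(p)=1$ and extended to $\overline{\mathbb{Q}_p}$. First I would record the heights forced by the hypotheses: since $p^k\mid a_0$ but $p^{k+1}\nmid a_0$ we have $v_p(a_0)=k$; for $1\le i\le j-1$ we have $v_p(a_i)\ge k$; and $v_p(a_j)=0$. Plotting the points $(i,v_p(a_i))$ and taking the lower convex hull, the left-hand portion up to abscissa $j$ must be the single segment joining $(0,k)$ to $(j,0)$: every intermediate point $(i,v_p(a_i))$ with $0<i<j$ lies strictly above this segment, since the segment has height $k(j-i)/j<k\le v_p(a_i)$ there, while $(j,0)$ sits at the minimal possible height $0$ and is therefore a vertex. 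This edge has slope $-k/j$, which is in lowest terms precisely because $\gcd(k,j)=1$.

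Next I would translate this into information about roots. By the standard theory of Newton polygons (Dumas), the edge of slope $-k/j$ and horizontal length $j$ accounts for exactly $j$ roots of $f$ in $\overline{\mathbb{Q}_p}$, counted with multiplicity, each of valuation $k/j$; convexity guarantees this is the only edge of that slope, so these are all the roots of valuation $k/j$. The arithmetic crux is the ramification estimate: if $\alpha\in\overline{\mathbb{Q}_p}$ has $v_p(\alpha)=k/j$ with $\gcd(k,j)=1$, then the value group $\tfrac1e\mathbb{Z}$ of $\mathbb{Q}_p(\alpha)$ contains $k/j$, forcing the ramification index $e$ to be divisible by $j$, whence $j\mid e\mid[\mathbb{Q}_p(\alpha):\mathbb{Q}_p]$. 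Consequently every irreducible factor of $f$ over $\mathbb{Q}_p$ possessing a root of valuation $k/j$ has all of its roots of that valuation and has degree divisible by $j$. A factor of degree $d$ (so $j\mid d$) occurring with multiplicity $\mu$ contributes $\mu d\ge j$ to the total count $j$ of valuation-$(k/j)$ roots; hence there is exactly one such factor $\pi$, it occurs with multiplicity one, and $\deg\pi=j$. Thus over $\mathbb{Q}_p$ the polynomial $f$ has a distinguished irreducible factor $\pi$ of degree exactly $j$ carrying all $j$ roots of valuation $k/j$.

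With $\pi$ in hand the factorization argument is short. Given $f=gh$ in $\mathbb{Z}[z]$, regard $g,h\in\mathbb{Q}_p[z]$; since $\pi$ is irreducible over $\mathbb{Q}_p$ and divides $gh$, it divides one of the factors, say $g$, so that $\deg g\ge\deg\pi=j$ and therefore $\deg h\le m-j$. This yields a factor of degree at most $k_0$ with $k_0\in\{0,m-j\}$, the value $0$ occurring exactly in the case $j=m$ where $m-j=0$. The two particular assertions then follow: if $j=m$, every factorization has a constant factor, which for primitive $f$ must be $\pm1$, so $f$ is irreducible; and if $j=m-1$, the smaller factor has degree at most $1$, but a degree-one factor in $\mathbb{Z}[z]$ would produce a rational root, so under the hypothesis that $f$ has none, that factor is a constant $\pm1$ and $f$ is again irreducible.

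The step I expect to require the most care is the counting argument isolating the single degree-$j$ factor $\pi$: one must handle possible multiplicities of the $j$ roots and confirm that no two distinct $\mathbb{Q}_p$-irreducible factors can share the valuation $k/j$ without overshooting the total of $j$. A secondary point to verify is that the non-monic case $p\mid a_m$ does not interfere, which it does not, since the edge of slope $-k/j$ depends only on $a_0,\dots,a_j$ and the vertex $(j,0)$, while the behaviour of the Newton polygon to the right of $(j,0)$ governs only roots of other valuations.
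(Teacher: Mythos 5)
Your proof is correct, but it takes a genuinely different route from the paper's. The paper argues elementarily with the integer coefficients of a putative factorization $f=gh$: writing $g=b_0+b_1z+\cdots+b_nz^n$ and $h=c_0+c_1z+\cdots+c_{m-n}z^{m-n}$, it splits into two cases according to whether $p$ divides exactly one or both of $b_0,c_0$. In the first case an Eisenstein-style recursion pushes $p^k$ into $b_0,\ldots,b_{j-1}$ while $p\nmid b_j$, forcing $\deg(g)\geq j$ and hence $\deg(h)\leq m-j$; in the second case (where necessarily $k\geq 2$) the paper invokes a lemma of Singh and Kumar --- this is where the coprimality $\gcd(k,j)=1$ enters --- to conclude that $p\mid a_j$ unless one of $g,h$ is a constant, i.e.\ a unit. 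Your argument instead works over $\mathbb{Q}_p$: the hypotheses pin down the first edge of the Newton polygon as the segment from $(0,k)$ to $(j,0)$, whose slope $-k/j$ is in lowest terms; Dumas' theorem together with the ramification bound $j\mid e\mid[\mathbb{Q}_p(\alpha):\mathbb{Q}_p]$ isolates a unique irreducible $p$-adic factor $\pi$ of degree exactly $j$, and whichever of $g,h$ it divides must have degree at least $j$, bounding the other by $m-j$. What the paper's route buys is self-containedness: nothing beyond integer divisibility and the cited lemma. What your route buys is uniformity and structure: the paper's two cases collapse into one argument, the role of $\gcd(k,j)=1$ becomes transparent as the lowest-terms condition on the slope, and you get the sharper conclusion that in every factorization one factor has degree at least $j$ --- very much in the spirit of the paper's own remark that Eisenstein-type criteria are consequences of Dumas' Newton-polygon theorem. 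Your handling of the two delicate points (the count of valuation-$k/j$ roots, and the harmlessness of $p\mid a_m$) is sound: no point with abscissa $i>j$ can lie on the line through $(0,k)$ and $(j,0)$ since that would force a negative valuation, so the edge terminates at $(j,0)$ regardless of the right-hand part of the polygon.
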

Theorem \ref{th:1} for $j=m$ yields a well known irreducibility criterion which may be found in \cite[Theorem B]{JSSK2021}.

Our next result uses prime factorization of the constant term of $f$ in connection with the root location of the polynomial $f$, which also generalizes one of the irreducibility criteria proved in \cite[Theorem A]{JSSK2021}.
\begin{theorem}\label{th:2}
Let $f = a_0 + a_1z +\cdots+a_m z^m\in \Bbb{Z}[z]$ be a primitive polynomial for which $a_0=\pm p^k d$ for some  positive integers $k$ and $d$ and a prime $p$ not dividing $d$ such that the zeros of $f$ lie outside the closed disk $|z|\leq d$. Suppose there exists an index $j$ with $1\leq j\leq m$ for which $p$ does not divide $a_j$. Then $f$ is a product of at most $\min\{k,j\}$ irreducible factors in $\mathbb{Z}[z]$. In particular, if $k=1$ or $j=1$, then $f$ is irreducible.
\end{theorem}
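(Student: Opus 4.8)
The plan is to begin with an arbitrary factorization $f = g_1 g_2 \cdots g_t$ of $f$ into irreducible factors in $\mathbb{Z}[z]$ and to establish the single inequality $t \le \min\{k,j\}$; the ``in particular'' assertion is then immediate, since either $k=1$ or $j=1$ forces $\min\{k,j\}=1$ and hence $t=1$. First I would record two preliminary facts. Since $f$ is primitive, Gauss's lemma on contents prevents any prime integer from appearing as a factor, so every $g_i$ has degree at least $1$. Second, writing $c_i$ for the leading coefficient of $g_i$ and $\beta_{i,1},\ldots,\beta_{i,\deg g_i}$ for its roots (each of which is a root of $f$), the constant term satisfies $|g_i(0)| = |c_i|\prod_{\ell}|\beta_{i,\ell}|$. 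Note also that $a_0\neq 0$, since the hypothesis places every root of $f$ strictly outside $|z|\le d$ with $d\ge 1$, so $0$ is not a root.

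The crucial step, which I expect to be the main obstacle, is to show that $p \mid g_i(0)$ for every $i$, as this is precisely where the root-location hypothesis must be converted into a divisibility statement. Since every root $\alpha$ of $f$ obeys $|\alpha| > d$ and $d \ge 1$, the displayed product formula gives $|g_i(0)| \ge |c_i|\,d^{\deg g_i} > d$. On the other hand, $g_i(0)$ is an integer dividing $a_0 = \pm p^k d$. If $p$ failed to divide $g_i(0)$, then $g_i(0)$ would be coprime to $p^k$ and hence divide $d$, forcing $|g_i(0)| \le d$ and contradicting $|g_i(0)| > d$. Therefore $p \mid g_i(0)$, and in particular $v_p\big(g_i(0)\big) \ge 1$ for each $i$, where $v_p$ denotes the $p$-adic valuation.

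With this in place, the two halves of the bound follow quickly. Taking $p$-adic valuations in $\prod_{i=1}^{t} g_i(0) = a_0$ and using $v_p(a_0)=k$ gives
\[
k = \sum_{i=1}^{t} v_p\!\big(g_i(0)\big) \ge t,
\]
so $t \le k$. For the bound $t \le j$, I would reduce modulo $p$: because $p \mid g_i(0)$, each reduction $\bar g_i \in \mathbb{F}_p[z]$ is divisible by $z$, whence $z^{t}$ divides $\bar f = \bar g_1 \cdots \bar g_t$. This forces $p$ to divide $a_0, a_1, \ldots, a_{t-1}$; but $p \nmid a_j$ by hypothesis, so $j \ge t$. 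Combining the two inequalities yields $t \le \min\{k,j\}$, which completes the argument.
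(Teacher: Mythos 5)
Your proposal is correct and takes essentially the same approach as the paper: both convert the root-location hypothesis into the key divisibility $p \mid g_i(0)$ for every irreducible factor, then obtain $t \le k$ by comparing $p$-adic valuations in $\prod_i g_i(0) = a_0$, and $t \le j$ from the fact that $p$ then divides $a_0,\ldots,a_{t-1}$ (your reduction modulo $p$, via $z^{t}\mid \bar f$ in $\mathbb{F}_p[z]$, is just a tidier packaging of the paper's expansion of $a_j$ as $\sum a_{1i_1}\cdots a_{ti_t}$, each term of which contains some factor's constant term). One small slip worth fixing: in the chain $|g_i(0)| \ge |c_i|\,d^{\deg g_i} > d$ the strictness is misplaced, since $|c_i|\,d^{\deg g_i} = d$ can occur (e.g.\ $|c_i|=1$ and $\deg g_i = 1$); the strict inequality belongs in the first step, because every root strictly exceeds $d$ in modulus, after which the needed conclusion $|g_i(0)| > d$ stands.
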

If instead the prime factorization of the leading coefficient of $f$ is used, then we may have the following factorization result.
\begin{theorem}\label{th:3}
Let $f = a_0 + a_1z +\cdots+a_m z^m\in \Bbb{Z}[z]$ be a primitive polynomial for which $a_m=\pm p^k d$ for some  positive integers $k$ and $d$ and a prime $p$ not dividing $d$, such that the zeros of $f$ lie outside the closed disk $|z|\leq d$.  Suppose there exists an index $j$ with $1\leq j\leq m$ for which $p$ does not divide $a_{m-j}$. If $|a_0/q|\leq |a_m|$, where $q$ is the smallest prime divisor of $a_0$, then $f$ is a product of at most $\min\{k,j\}$ irreducible factors in $\mathbb{Z}[z]$. In particular, if $k=1$ or $j=1$, then $f$ is irreducible.
\end{theorem}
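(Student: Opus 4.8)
My plan is to follow the template of the proof of Theorem \ref{th:2}, but to track the \emph{leading} coefficients of the irreducible factors instead of their constant terms; the auxiliary hypothesis $|a_0/q|\le|a_m|$ is exactly the ingredient that rescues this transfer. So I would start by writing a factorization into irreducibles $f=f_1\cdots f_r$ in $\mathbb{Z}[z]$. Since $f$ is primitive, none of the $f_i$ is constant, so each has degree $n_i\ge 1$ and a leading coefficient $c_i$, with $\sum_i n_i=m$ and $\prod_i c_i=a_m=\pm p^kd$. If $r=1$ there is nothing to do, because $\min\{k,j\}\ge 1$; hence I may assume $r\ge 2$, in which case every factor is \emph{proper}, that is $1\le n_i\le m-1$.

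The heart of the argument is to prove that $p\mid c_i$ for each $i$. Let $Z(g)$ denote the multiset of zeros of a polynomial $g$; then $Z(f)$ is the disjoint union of the $Z(f_i)$. Evaluating $f=a_m\prod_{\beta\in Z(f)}(z-\beta)$ at $0$ gives $|a_0|=|a_m|\prod_{\beta\in Z(f)}|\beta|$, and the hypothesis $|a_0|\le q|a_m|$ then forces $\prod_{\beta\in Z(f)}|\beta|\le q$. For a fixed factor $f_i$ I would record two facts. First, $|f_i(0)|=|c_i|\prod_{\beta\in Z(f_i)}|\beta|>d^{\,n_i}\ge 1$ forces $|f_i(0)|\ge 2$; since $f_i(0)\mid a_0$, every prime dividing $f_i(0)$ divides $a_0$ and so is $\ge q$, whence $|f_i(0)|\ge q$. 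Second, the $m-n_i\ge 1$ zeros outside $Z(f_i)$ each exceed $d$ in modulus, so $\prod_{\beta\notin Z(f_i)}|\beta|>d^{\,m-n_i}$ and therefore $\prod_{\beta\in Z(f_i)}|\beta|<q/d^{\,m-n_i}$. Putting these together,
\[
|c_i|=\frac{|f_i(0)|}{\prod_{\beta\in Z(f_i)}|\beta|}>\frac{q\,d^{\,m-n_i}}{q}=d^{\,m-n_i}\ge d .
\]
Since $c_i\mid a_m=\pm p^kd$ with $p\nmid d$, I can write $|c_i|=p^{b_i}e_i$ with $e_i\mid d$; if $b_i=0$ then $|c_i|=e_i\le d$, contradicting $|c_i|>d$. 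Hence $b_i\ge 1$, i.e. $p\mid c_i$.

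Two counting steps then finish it. Since $\sum_i b_i=v_p(a_m)=k$ and each $b_i\ge 1$, I get $r\le k$. For the bound $r\le j$ I would reduce modulo $p$: as $p\nmid a_{m-j}$, the reduction $\bar f\in\mathbb{F}_p[z]$ is nonzero with $\deg\bar f\ge m-j$, so the total degree drop is $m-\deg\bar f\le j$. Because $\mathbb{F}_p[z]$ is a domain, $\bar f=\prod_i\bar f_i$ with every $\bar f_i\ne 0$ and $\deg\bar f=\sum_i\deg\bar f_i$; thus the total drop equals $\sum_i(n_i-\deg\bar f_i)$, and since $p\mid c_i$ makes each summand at least $1$, this sum is at least $r$. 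Hence $r\le m-\deg\bar f\le j$, giving $r\le\min\{k,j\}$, and in particular $r=1$ whenever $k=1$ or $j=1$.

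I expect the magnitude estimate in the second paragraph to be the main obstacle. The tempting shortcut is to pass to the reciprocal polynomial $f^\ast(z)=z^mf(1/z)$, whose constant term is $a_m$, and simply invoke Theorem \ref{th:2}; this fails because the zeros of $f^\ast$ are the reciprocals $1/\beta$, which lie in the small disk $|z|<1/d$ rather than outside $|z|\le d$, so the root-location hypothesis of Theorem \ref{th:2} is not met. The role of $|a_0/q|\le|a_m|$ is precisely to cap the product of all zero-moduli by $q$, and it is this global cap that upgrades the individual estimates into the lower bound $|c_i|>d$ that forces $p\mid c_i$.
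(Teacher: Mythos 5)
Your proof is correct, and its main skeleton coincides with the paper's: factor $f=f_1\cdots f_r$ into irreducibles, reduce to $r\ge 2$, and use the root-location hypothesis together with $|a_0/q|\le |a_m|$ and the fact that each $|f_i(0)|$ is a nontrivial divisor of $|a_0|$ (hence $\ge q$) to force $|c_i|>d^{\,m-n_i}\ge d$, whence $p\mid c_i$ for every $i$ and $r\le k$. The paper runs this exact estimate, only packaged differently: instead of your global bound $\prod_{\beta}|\beta|\le q$ partitioned over the factors, it multiplies the ratios $|f_t(0)|/(|\alpha_t|d^{\deg f_t})>1$ and cancels one of them; the content is identical, and your explicit verification that $c_i\mid \pm p^kd$, $p\nmid d$, $|c_i|>d$ imply $p\mid c_i$ fills in a step the paper leaves terse. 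Where you genuinely diverge is the bound $r\le j$. The paper assumes $r>j$, expands $a_{m-j}$ as the convolution sum $\sum_{\sum_t i_t=m-j}a_{1i_1}\cdots a_{ri_r}$, and argues by pigeonhole that every term in the sum contains some leading coefficient $\alpha_t$ (because $r>j$ forces some $\deg(f_t)-i_t=0$), so $p\mid a_{m-j}$, a contradiction. Your argument reduces mod $p$ instead: since $p\mid c_i$, each $\bar f_i\in\mathbb{F}_p[z]$ drops degree by at least one, so $\deg \bar f\le m-r$, while $p\nmid a_{m-j}$ gives $\deg\bar f\ge m-j$, hence $r\le j$. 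The two are the same pigeonhole in different clothing, but yours is the more economical and structural write-up: it trades the multi-index bookkeeping for a one-line degree count in the integral domain $\mathbb{F}_p[z]$. Your closing remark about why one cannot simply apply Theorem \ref{th:2} to the reciprocal polynomial is also apt, and explains why the paper proves Theorem \ref{th:3} from scratch rather than by reduction.
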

As a generalization of Theorem \ref{th:B} to include the class of polynomials in which a coefficient other than the $(m-1)$th coefficient is dominant, we may have the following result.
\begin{theorem}\label{th:4}
Let $f = a_0 + a_1z +\cdots+a_n z^n\in \Bbb{Z}[z]$ be a primitive polynomial with $a_0a_m\neq 0$ and $m\geq 2$. Let $b$ be a positive divisor of $a_m$ and $\delta$ be a positive real number with $(1/b)\leq \delta\leq 1$ for which there exists an index $j$ with $0\leq j\leq m-1$ such that
\begin{eqnarray*}
|a_{j}|&>&\sum_{0\leq i<j}|a_i||a_{m}|^{j-i}+\sum_{j<i\leq m}|a_i|\delta^{i-j},
\end{eqnarray*}
where the summation over empty set is understood to be zero. Then $f$ is a product of at most $m-j$ irreducible polynomials in $\mathbb{Z}[z]$. In particular, if $j=m-1$, then $f$ is irreducible.
\end{theorem}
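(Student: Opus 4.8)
The plan is to reduce to a monic polynomial exactly as in the proof of Theorem~\ref{th:B}, and then to combine a zero‑counting argument based on Rouch\'e's theorem with an elementary bound on the number of factors over $\mathbb{Z}$. Following that proof, I would set
\[
g(z)=a_m^{\,m-1}f(z/a_m)=\sum_{i=0}^{m}a_i a_m^{\,m-1-i}z^i,
\]
which is a monic polynomial in $\mathbb{Z}[z]$ having the same number of irreducible factors in $\mathbb{Z}[z]$ as $f$; moreover $g(0)=a_m^{\,m-1}a_0\neq 0$ since $a_0a_m\neq 0$. Thus it suffices to show that $g$ is a product of at most $m-j$ irreducible factors.

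Next I would show that $g$ has exactly $j$ zeros in the open unit disk. Since the $i$th coefficient of $g$ is $a_i a_m^{\,m-1-i}$, on the circle $|z|=1$ I would compare the term $a_j a_m^{\,m-1-j}z^j$ with the remaining terms. Dividing the desired strict domination inequality by the positive quantity $|a_m|^{\,m-1-j}$ (here $m-1-j\ge 0$ as $j\le m-1$), it is enough to verify
\[
|a_j|>\sum_{0\le i<j}|a_i|\,|a_m|^{\,j-i}+\sum_{j<i\le m}|a_i|\,|a_m|^{-(i-j)}.
\]
For $i<j$ the summands coincide with those in the hypothesis, while for $i>j$ one has $|a_m|^{-(i-j)}\le\delta^{\,i-j}$ because $\delta\ge 1/b\ge 1/|a_m|$, using that $b$ is a positive divisor of $a_m$. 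Hence the right‑hand side above is dominated by the right‑hand side of the hypothesis, and the inequality is strict. By Rouch\'e's theorem applied to $g$ and its monomial $a_j a_m^{\,m-1-j}z^j$ on $|z|=1$, the polynomial $g$ has exactly $j$ zeros in $|z|<1$, and by strictness none on $|z|=1$; consequently $m-j$ zeros of $g$ lie in $|z|>1$.

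Finally I would bound the number of factors. In any factorization of $g$ into nonconstant irreducible factors in $\mathbb{Z}[z]$, each such factor $h$ has leading coefficient $\pm 1$ (their product is the leading coefficient $1$ of $g$), so $|h(0)|$ equals the product of the moduli of the zeros of $h$; since $h(0)\in\mathbb{Z}\setminus\{0\}$ this product is at least $1$, and therefore $h$ cannot have all its zeros in $|z|<1$. Thus every nonconstant factor contains at least one of the $m-j$ zeros of $g$ lying in $|z|>1$, and since these zeros are partitioned among the factors there are at most $m-j$ of them; the same bound then holds for $f$, and the case $j=m-1$ leaves a single factor, i.e. irreducibility. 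The point that needs care is the normalization: it is the exponent $m-1$ in $g(z)=a_m^{\,m-1}f(z/a_m)$ together with the lower bound $\delta\ge 1/b$ that makes the dominant term of $g$ win \emph{precisely} on the unit circle, whereas testing $f$ itself on $|z|=\delta$ and rescaling its zeros by $a_m$ would push the small zeros back outside the unit disk and break the factor count.
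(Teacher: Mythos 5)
Your proof is correct, and it takes a genuinely different (and leaner) route than the paper's. The paper scales by the divisor $b$ itself, setting $g(z)=b^{m-1}f(z/b)$, which has leading coefficient $a_m/b$ and is generally not monic; it must then use the full force of the $\delta$-inequality to show that the entire annulus $b/|a_m|\le|z|\le 1$ is zero-free, so that the $j$ zeros counted by Rouch\'e's theorem on $|z|=1$ in fact lie in the smaller disk $|z|<b/|a_m|$, and it concludes with Lemma \ref{L2}, whose proof needs precisely that smaller disk because an irreducible factor of the non-monic $g$ may have leading coefficient as large as $|a_m/b|$. You instead exploit monotonicity of the hypothesis in $\delta$: since $\delta\ge 1/b\ge 1/|a_m|$, the stated inequality implies the one with $\delta$ replaced by $1/|a_m|$, which is exactly the Rouch\'e domination condition on $|z|=1$ for the monic polynomial $g(z)=a_m^{m-1}f(z/a_m)$ of Theorem \ref{th:B}; monicity then reduces the factor count to the elementary fact that a factor with leading coefficient $\pm 1$ and nonzero integer constant term cannot have all of its zeros in the open unit disk. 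Your version is shorter, dispenses with the annulus analysis and with Lemma \ref{L2}, and makes transparent that the parameters $b$ and $\delta$ are inessential to the statement (every admissible choice implies the extreme case $b=|a_m|$, $\delta=1/|a_m|$); the paper's version, in exchange, isolates Lemma \ref{L2} as a reusable tool and yields slightly sharper information, namely that the $j$ small zeros lie in $|z|<b/|a_m|$ rather than merely in $|z|<1$. Both proofs rely equally on the fact, asserted but not proved in the paper, that $f$ and its rescaled version have the same number of irreducible factors in $\mathbb{Z}[z]$ (valid by Gauss's lemma, since $f$ is primitive and your $g$ is monic), so your appeal to it is no weaker than the paper's.
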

 The following corollaries are immediate from Theorem \ref{th:4}.
\begin{corollary}\label{cor:0}
    Let $f = a_0 + a_1z +\cdots+a_n z^n\in \Bbb{Z}[z]$ be a primitive polynomial with $a_0a_m\neq 0$ and $m\geq 2$. Let $b$ be a positive divisor of $a_m$ for which there exists a real number $\gamma\geq |a_m|/b$ and an index $j$ with $0\leq j\leq m-1$ such that
\begin{eqnarray*}
|a_{j}|&>&\sum_{0\leq i<j}|a_i|b^{j-i}\gamma^{j-i}+\sum_{j<i\leq m}\frac{|a_i|}{|a_m|^{i-j}}\gamma^{i-j},
\end{eqnarray*}
where the summation over empty set is understood to be zero. Then $f$ is a product of at most $m-j$ irreducible polynomials in $\mathbb{Z}[z]$. In particular, if $j=m-1$, then $f$ is irreducible.
\end{corollary}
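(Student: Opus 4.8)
The plan is to derive this directly from Theorem \ref{th:4} by specializing the free parameter $\delta$ appearing there. Since $b$ is a positive divisor of $a_m$ we have $b\geq 1$, so the choice $\delta = 1/b$ automatically satisfies the admissibility constraint $1/b\leq \delta\leq 1$ required by Theorem \ref{th:4}. With this choice the right-hand side of the inequality in Theorem \ref{th:4} becomes $\sum_{0\leq i<j}|a_i||a_m|^{j-i}+\sum_{j<i\leq m}|a_i|(1/b)^{i-j}$, and the whole argument reduces to showing that the right-hand side of the corollary's inequality dominates this quantity term by term.

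To carry this out, I would first rewrite the corollary's two sums in the form $\sum_{0\leq i<j}|a_i|(b\gamma)^{j-i}$ and $\sum_{j<i\leq m}|a_i|(\gamma/|a_m|)^{i-j}$. The hypothesis $\gamma\geq |a_m|/b$ gives at once the two multiplicative consequences $b\gamma\geq |a_m|$ and $\gamma/|a_m|\geq 1/b$. Since every exponent $j-i$ in the first sum and every exponent $i-j$ in the second sum is a positive integer, raising these base inequalities to the relevant powers yields $(b\gamma)^{j-i}\geq |a_m|^{j-i}$ and $(\gamma/|a_m|)^{i-j}\geq (1/b)^{i-j}=\delta^{i-j}$ for each index $i$. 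Summing against the nonnegative weights $|a_i|$ then shows that the corollary's right-hand side is at least the right-hand side produced by Theorem \ref{th:4} with $\delta=1/b$.

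Consequently, the strict inequality assumed in the corollary forces the strict inequality required in Theorem \ref{th:4}, so all hypotheses of that theorem are met, and $f$ is a product of at most $m-j$ irreducible polynomials in $\mathbb{Z}[z]$. The final assertion follows by taking $j=m-1$, since then $m-j=1$, which is exactly the ``in particular'' clause of Theorem \ref{th:4}.

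I anticipate no serious obstacle: the result is a genuine corollary, and the only point requiring care is the bookkeeping of the exponents, namely confirming that both comparisons run in the correct direction once the single inequality $\gamma\geq |a_m|/b$ is split into its two consequences. It is worth noting that each of the two sums in the corollary is increasing in $\gamma$, so the hypothesis is in fact weakest at $\gamma=|a_m|/b$, where $b\gamma=|a_m|$ and $\gamma/|a_m|=1/b$ make the two right-hand sides coincide exactly; this confirms that $\delta=1/b$ is the natural and sharp choice.
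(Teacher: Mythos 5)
Your proof is correct and takes the same route as the paper: the paper offers no separate argument, stating only that Corollary \ref{cor:0} is immediate from Theorem \ref{th:4}, and your specialization $\delta=1/b$ (admissible since $b\geq 1$) combined with the term-by-term comparisons $(b\gamma)^{j-i}\geq |a_m|^{j-i}$ and $(\gamma/|a_m|)^{i-j}\geq (1/b)^{i-j}$ from $\gamma\geq |a_m|/b$ is exactly that reduction. Your closing observation that $\gamma=|a_m|/b$ makes the two hypotheses coincide correctly identifies $\delta=1/b$ as the sharp choice, so there are no gaps.
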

The Corollary \ref{cor:0} in particular yields the main result proved in the paper \cite{JRG2023} for $b=1$.
Taking $b=|a_m|=1/\delta$ in Theorem \ref{th:4} yields the following factorization result.
\begin{corollary}\label{cor:1}
Let $f = a_0 + a_1z +\cdots+a_m z^m\in \Bbb{Z}[z]$ be a primitive polynomial with $a_0a_m\neq 0$ and $m\geq 2$. Suppose there exists an index $j$ with $0\leq j\leq m-1$ for which
\begin{eqnarray*}
|a_{j}|&>&|a_{j+1}/a_m|+\sum_{0\leq i<j}|a_i||a_m|^{j-i}+\sum_{j+1<i\leq m}|a_{i}||a_m|^{-(i-j)},
\end{eqnarray*}
where the summation over empty set is understood to be zero. Then $f$ is a product of at most $m-j$ irreducible polynomials in $\mathbb{Z}[z]$. In particular, if $j=m-1$, then $f$ is irreducible.
\end{corollary}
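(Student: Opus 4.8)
The plan is to derive this directly from Theorem \ref{th:4} by the specialization announced just before the statement, namely $b = |a_m|$ and $\delta = 1/|a_m|$. First I would verify that this choice is admissible for Theorem \ref{th:4}. Since $a_m$ is a nonzero integer, $b = |a_m|$ is a positive divisor of $a_m$, and $\delta = 1/|a_m|$ is a positive real number. The constraint $1/b \le \delta \le 1$ holds because $1/b = 1/|a_m| = \delta$ makes the lower bound an equality, while $|a_m| \ge 1$ (being a nonzero integer) forces $\delta = 1/|a_m| \le 1$. Hence the hypotheses on $b$ and $\delta$ in Theorem \ref{th:4} are met.

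Next I would substitute $\delta = 1/|a_m|$ into the dominance inequality of Theorem \ref{th:4}. The first sum $\sum_{0 \le i < j} |a_i| |a_m|^{j-i}$ is unaffected, and since $\delta^{i-j} = |a_m|^{-(i-j)}$ the second sum becomes
\[
\sum_{j < i \le m} |a_i|\, \delta^{i-j} = \sum_{j < i \le m} |a_i|\, |a_m|^{-(i-j)}.
\]
I would then isolate the $i = j+1$ term, which equals $|a_{j+1}|\,|a_m|^{-1} = |a_{j+1}/a_m|$, leaving the remaining terms indexed by $j+1 < i \le m$. This reindexing turns the inequality of Theorem \ref{th:4} into precisely
\[
|a_j| > |a_{j+1}/a_m| + \sum_{0 \le i < j} |a_i|\, |a_m|^{j-i} + \sum_{j+1 < i \le m} |a_i|\, |a_m|^{-(i-j)},
\]
which is exactly the hypothesis of the corollary.

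Finally, with the admissibility of $b$ and $\delta$ checked and the two inequalities shown to coincide, the conclusion that $f$ is a product of at most $m-j$ irreducible polynomials in $\mathbb{Z}[z]$ (and in particular is irreducible when $j = m-1$) follows immediately from Theorem \ref{th:4}. Every step is a routine substitution and reindexing, so I do not anticipate a genuine obstacle; the only point deserving a moment's care is confirming that the boundary case $1/b = \delta$ is permitted by the non-strict inequality $1/b \le \delta$ in Theorem \ref{th:4}, and that the integrality of $a_m$ secures $\delta \le 1$.
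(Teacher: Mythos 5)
Your proof is correct and matches the paper's approach exactly: the paper obtains Corollary \ref{cor:1} precisely by setting $b=|a_m|=1/\delta$ in Theorem \ref{th:4}, which is the specialization you carry out. Your additional checks (that $|a_m|$ is a positive divisor of $a_m$, that $1/b\leq\delta\leq 1$ holds at the boundary, and the reindexing that isolates the $i=j+1$ term) are exactly the routine verifications the paper leaves implicit.
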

In \cite{Bonciocat0}, Bonciocat et al. proved an irreducibility criterion for polynomials having integer coefficients in which the leading coefficient is divisible by a large prime power. The following result complements their irreducibility criterion and also improves upon the corresponding factorization result proved by the authors in \cite[Corollary 2]{JRG2023}.
\begin{corollary}\label{cor:2}
Let $f = p^sa_{j-1}z^{j-1}+p^N a_jz^j+\sum_{i=0;~i\neq j-1,j}^m a_i z^i\in \Bbb{Z}[z]$ be a primitive polynomial with $a_0a_{j-1}a_ja_m\neq 0$, $N\geq 1$, $m\geq 2$, $s\geq 0$, $1\leq j\leq m-1$, let $p$ be a prime number, $p\nmid |a_{j-1}a_j|$ such that
\begin{eqnarray*}
p^N|a_j|&>& |a_ma_{j-1}|p^{2s}+\sum_{i=2}^{j}|a_m^{i}a_{j-i}|p^{is}+\sum_{i=j+1}^m \frac{|a_{i}|}{|a_m|^{i-j}}.
\end{eqnarray*}
Then $f$ is a product of at most $m-j$ irreducible polynomials in $\mathbb{Z}[x]$. In particular, if $j=m-1$, then $f$ is irreducible.
\end{corollary}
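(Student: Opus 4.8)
The plan is to obtain Corollary \ref{cor:2} as a direct consequence of Theorem \ref{th:4} (equivalently, of Corollary \ref{cor:1}), applied to $f$ itself with dominant index equal to $j$ and with the parameters $b=|a_m|$ and $\delta=1/|a_m|$. First I would make the coefficients of $f$ explicit: writing $f=\sum_{i=0}^{m}c_iz^i$, one has $c_{j-1}=p^sa_{j-1}$, $c_j=p^Na_j$, and $c_i=a_i$ for all $i\notin\{j-1,j\}$. Since $1\le j\le m-1$, the leading coefficient is $c_m=a_m\neq0$ and the constant term $c_0$ is nonzero, so the nondegeneracy hypothesis $c_0c_m\neq0$, the condition $m\ge2$, and the assumed primitivity of $f$ all match the requirements of Theorem \ref{th:4}. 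The chosen parameters are admissible because $b=|a_m|$ is a positive divisor of $a_m$ and $\delta=1/|a_m|$ satisfies $1/b\le\delta\le1$, in fact $\delta=1/b$.

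With these choices the left-hand side of Theorem \ref{th:4} is $|c_j|=p^N|a_j|$, which is exactly the left-hand side of the inequality hypothesized in Corollary \ref{cor:2}. The heart of the argument is to check that the right-hand side of Theorem \ref{th:4},
\[
\sum_{0\le i<j}|c_i|\,|a_m|^{\,j-i}+\sum_{j<i\le m}|c_i|\,\delta^{\,i-j},
\]
does not exceed the right-hand side assumed in Corollary \ref{cor:2}. The second sum is in fact an identity: since $c_i=a_i$ for $i>j$ and $\delta=1/|a_m|$, it equals $\sum_{i=j+1}^{m}|a_i|/|a_m|^{\,i-j}$. For the first sum I would re-index by $t=j-i$, isolate the term $t=1$, which equals $|c_{j-1}|\,|a_m|=p^s|a_{j-1}|\,|a_m|$, and note that for $2\le t\le j$ the term equals $|a_{j-t}|\,|a_m|^{t}$. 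Comparing these against the terms $|a_ma_{j-1}|p^{2s}$ and $|a_m^{\,t}a_{j-t}|p^{ts}$ in the hypothesis, the required bound follows term by term from $p^s\le p^{2s}$ and $1\le p^{ts}$, both of which hold since $p\ge2$ and $s\ge0$. Thus the hypothesis of Corollary \ref{cor:2} implies the inequality needed to invoke Theorem \ref{th:4}, and we conclude that $f$ is a product of at most $m-j$ irreducible polynomials in $\mathbb{Z}[z]$; the case $j=m-1$ forces at most one factor, i.e.\ irreducibility.

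I expect the only real difficulty to be bookkeeping. One must carefully handle the off-by-one produced by the coefficient $c_{j-1}=p^sa_{j-1}$, which carries its own factor $p^s$ and is what turns the naively expected $p^s$ into the $p^{2s}$ appearing in the isolated term of the hypothesis; keep the two summation ranges correctly aligned after the substitution $t=j-i$, including the empty-sum conventions when $j=1$; and verify that the boundary choice $\delta=1/b$ is genuinely permitted by Theorem \ref{th:4}. I would also point out that the remaining hypotheses $N\ge1$ and $p\nmid|a_{j-1}a_j|$ are not used in this deduction, which rests solely on the absolute values of the coefficients; their role is to pin down the exact prime-power shape of $f$, making the statement a true complement to the criterion of Bonciocat et al.\ and an improvement over \cite[Corollary 2]{JRG2023}.
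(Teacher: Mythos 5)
Your proposal is correct and follows exactly the route the paper intends: Corollary \ref{cor:2} is deduced from Theorem \ref{th:4} with $b=|a_m|$ and $\delta=1/|a_m|$ (i.e., via Corollary \ref{cor:1}), the key point being the term-by-term comparison $p^s\le p^{2s}$ and $1\le p^{is}$ that shows the hypothesized inequality dominates the one Theorem \ref{th:4} requires. Your side remarks (the empty-sum case $j=1$, the unused hypotheses $N\ge 1$ and $p\nmid|a_{j-1}a_j|$) are accurate and consistent with the paper's treatment.
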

We now give some examples whose factorization properties may be deduced from Theorems \ref{th:1}-\ref{th:4}, respectively.
\begin{example}
For a prime $p$ and positive integers $m$ and $n$ with $m\geq n\geq 2$, the polynomial
\begin{eqnarray*}
P_1 &=& p^{m-1}\Bigl(\frac{z^{n}-1}{z-1}\Bigr)\pm z^{m}
\end{eqnarray*}
satisfies the hypothesis of Theorem \ref{th:1} for $j=m$, and $k=m-1$. So,  the polynomial $P_1$ is irreducible in $\mathbb{Z}[z]$.
\end{example}
\begin{example}
For a prime $p$ and positive integer $k$, $d$, and $m\geq 2$ with $p\nmid d$, consider the polynomial
\begin{eqnarray*}
P_2 &=& \pm p^k d+a_1z+a_2z^2+\cdots+a_m z^m,
\end{eqnarray*}
where
\begin{eqnarray*}
p^k>m\max\{|a_1|,|a_2|d,\ldots,|a_m|d^{m-1}\}.
\end{eqnarray*}
For $|z|\leq d$ we have
\begin{eqnarray*}
|P_2(z)|\geq p^k d-\sum_{i=1}^m |a_i||z|^i &\geq&d\bigl(p^k-\sum_{i=1}^m \max_{1\leq i\leq m}\{|a_i|d^{i-1}\}\bigr)\\
&=&d\bigl(p^k-m\max_{1\leq i\leq m}\{|a_i|d^{i-1}\}\bigr)>0.
\end{eqnarray*}
Consequently, all zeros of $P_2$ lie outside the closed disk $|z|\leq d$. If $k=1$, then by Theorem \ref{th:2}, the polynomial $P_2$ is irreducible in $\mathbb{Z}[z]$.

On the other hand if  $k\geq 2$, and $p\nmid a_1$, then again $P_2$ is irreducible. Finally, if $k\geq 2$, and $p\nmid a_j$ with $j\geq 2$, then in view of Theorem \ref{th:2}, the polynomial $P_2$ is a product of at most $\min\{2,j\}=2$ irreducible factors in $\mathbb{Z}[z]$.
\end{example}
\begin{example}
For a prime $p$ and positive integer $k$, $d$, and $m\geq 2$ with $p\nmid d$, consider the polynomial
\begin{eqnarray*}
P_3 &=& a_0+a_1z+a_2z^2+\cdots+a_{m-1} z^{m-1}\pm (p^k d)z^{m},
\end{eqnarray*}
where
\begin{eqnarray*}
|a_0|>m\max\{|a_1|d,|a_2|d^2,\ldots,|a_{m-1}|d^{m-1},p^k d^{m+1}\}.
\end{eqnarray*}
Then proceeding as in the preceding example, it can be proved that all zeros of $P_3$ lie outside the closed disk $|z|\leq d$. Now if  $p\nmid a_{m-j}$ for some $j$ with $1\leq j\leq m$ and $|a_0/q|\leq p^kd$, where $q$ is the smallest prime divisor of $a_0$, then by Theorem \ref{th:3}, the polynomial $P_3$ is a product of at most $\min\{k,j\}$ irreducible factors in $\mathbb{Z}[z]$.
\end{example}
\begin{example} For positive integers $m$, $j$, $a$, $b$ with $m\geq 3$, $1\leq j\leq m-1$, and $b<a-b$, the polynomial
\begin{eqnarray*}
P_4 &=& 1\pm az\pm a^2z^2\pm\cdots\pm a^{j-1} z^{j-1}\pm (a^{j}-b^{j}+1)z^{j} \pm bz^m
\end{eqnarray*}
satisfies the hypothesis of Theorem \ref{th:4} with $a_i=a^i$ for $i=0,\ldots,j$, $a_i=0$ for $i=j+1,\ldots,m-1$, $a_m=b$, $\delta=1/b$, since here
\begin{eqnarray*}
|a_{j}|=(a^{j}-b^{j}+1)>b\frac{a^{j}-b^{j}}{a-b}+\frac{1}{b^{m-1-j}}=\sum_{0\leq i<j-1}|a_i|b^{j-i}+\sum_{j<i\leq m}|a_i|\delta^{i-j}.
\end{eqnarray*}
So, the polynomial $P_4$ is a product of at most $m-j$ irreducible polynomials in $\mathbb{Z}[z]$.
\end{example}
\section{Proofs of Theorems}\label{Sec:3}
To prove Theorems \ref{th:1}, we need the following result proved in \cite{JSSK2021}.
\begin{lemmax}[Singh and Kumar \cite{JSSK2021}]\label{L1}
    Let $f=a_0+ a_{1}z+\cdots+a_m z^m$, $g=b_0+b_1z+\cdots+b_nz^n$, and  $h=c_0+c_1z+\cdots+c_{m-n}z^{m-n}$ be nonconstant polynomials in $\Bbb{Z}[z]$ such that $f(z)=g(z)h(z)$. Suppose there exists a prime $p$ and positive integers $k\geq 2$ and $1\leq j\leq m$ such that $p^k$ divides
    $\gcd(a_0,a_1,\ldots,a_{j-1})$, $p^{k+1}$ does not divide $a_0$, and $\gcd(k,j)=1$. If $p$ divides $b_0$ and $p$ divides $c_0$, then $p$ divides $a_j$.
 \end{lemmax}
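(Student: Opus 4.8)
The plan is to argue by contradiction: assume $p\nmid a_j$ and derive a contradiction with $\gcd(k,j)=1$. Throughout, let $v=v_p$ be the $p$-adic valuation. First I would record the consequences of the hypotheses: $v(a_i)\ge k$ for $0\le i\le j-1$, while $v(a_0)=k$ exactly (as $p^{k+1}\nmid a_0$), and $v(b_0),v(c_0)\ge 1$ with $v(b_0)+v(c_0)=v(a_0)=k$; in particular each of $v(b_0),v(c_0)$ lies in $\{1,\dots,k-1\}$, which is where $k\ge 2$ is used. Next, set $u=\min\{s:p\nmid b_s\}$ and $w=\min\{t:p\nmid c_t\}$. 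If either set were empty, then $p$ would divide every coefficient of $g$ (or of $h$), hence every $a_i$, contradicting $p\nmid a_j$; so $u,w$ exist. Writing $a_i=\sum_{s+t=i}b_sc_t$, a direct check shows every term of $a_i$ with $i<u+w$ is divisible by $p$ (one of $s<u$ or $t<w$ must hold), whereas $a_{u+w}\equiv b_uc_w\not\equiv 0\pmod p$. Thus $u+w$ is the least index $i$ with $p\nmid a_i$; since $p\mid a_i$ for $i<j$ while $p\nmid a_j$, this least index equals $j$, giving $u+w=j$.

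Next I would bring in the $p$-adic Newton polygons $\mathrm{NP}(f),\mathrm{NP}(g),\mathrm{NP}(h)$. From $v(a_0)=k$, $v(a_i)\ge k$ for $0<i<j$, and $v(a_j)=0$, the slope $(v(a_i)-k)/i$ is nonnegative for $0<i<j$, equals $-k/j$ at $i=j$, and exceeds $-k/j$ for $i>j$; hence the leftmost edge of $\mathrm{NP}(f)$ is exactly the segment from $(0,k)$ to $(j,0)$, of slope $-k/j$ and horizontal length $j$. Because $\gcd(k,j)=1$, this edge is primitive, i.e.\ contains no interior lattice point.

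Then I would invoke additivity of Newton polygons under multiplication: $\mathrm{NP}(f)$ is the Minkowski sum of $\mathrm{NP}(g)$ and $\mathrm{NP}(h)$, so its leftmost edge is the concatenation of the minimal-slope edges of $\mathrm{NP}(g)$ and $\mathrm{NP}(h)$. Both polygons start at height $v(b_0),v(c_0)\ge 1$ and descend to height $0$, so each has a genuine leftmost (negative-slope) edge; let $s_g,s_h$ be their minimal slopes and $\ell_g,\ell_h$ the lengths, with $\min(s_g,s_h)=-k/j$. If $s_g<s_h$, the whole length-$j$ edge comes from $g$, so $\ell_g=j$ and this edge runs from $(0,v(b_0))$ to $(j,v(b_0)-k)$; but $v(b_0)\le k-1$ forces the endpoint to have negative height, impossible for a vertex. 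The case $s_h<s_g$ is symmetric. If $s_g=s_h=-k/j$, then each contributing edge has slope $-k/j$ in lowest terms, so its horizontal length is a positive multiple of $j$; hence $\ell_g,\ell_h\ge j$ and $\ell_g+\ell_h\ge 2j>j$, contradicting $\ell_g+\ell_h=j$. Every case is impossible, so $p\nmid a_j$ fails and $p\mid a_j$, as claimed.

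The main obstacle is the last step: making the Minkowski-sum bookkeeping rigorous, that is, justifying that the minimal-slope edge of the product polygon is assembled precisely from the minimal-slope edges of the factors, and that an edge of slope $-k/j$ in lowest terms has length divisible by $j$. If one prefers not to quote Newton-polygon multiplicativity, the same conclusion is reachable elementarily by tracking the valuations $v(b_s),v(c_t)$ directly: the relation $v(b_0)+v(c_0)=k$ together with $v(a_i)\ge k$ for $i<j$ constrains the descent profiles of $g$ and $h$ on $[0,u]$ and $[0,w]$, and $\gcd(k,j)=1$ obstructs splitting the single primitive slope $-k/j$ between the two factors; this reproves exactly the portion of the theory invoked above.
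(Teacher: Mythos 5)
The paper never proves this lemma: it is imported verbatim from Singh and Kumar \cite{JSSK2021} and used as a black box in the proof of Theorem \ref{th:1}, so there is no in-paper argument to compare yours against; I can only assess your proof on its own terms, and it is correct. The preliminary reductions are sound: under the contradiction hypothesis $p\nmid a_j$ the indices $u,w$ exist, $u+w=j$, and $v_p(b_0)+v_p(c_0)=k$ with both valuations in $\{1,\dots,k-1\}$. The Newton-polygon bookkeeping is also right: the hypotheses force the leftmost edge of $\mathrm{NP}(f)$ to be exactly the segment from $(0,k)$ to $(j,0)$, and Dumas' theorem \cite{D} (which this paper itself cites as classical, so quoting it is legitimate) says this edge is assembled from the minimal negative-slope edges of $\mathrm{NP}(g)$ and $\mathrm{NP}(h)$. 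Your three cases then exhaust the possibilities correctly: if one factor carries the whole length-$j$ edge, its polygon would descend from height at most $k-1$ by $k$, ending below height $0$, which is impossible for integer coefficients; if both factors contribute edges of slope $-k/j$, then $\gcd(k,j)=1$ forces each contribution to have horizontal length a positive multiple of $j$, so the total exceeds $j$. The only stylistic criticism is that Dumas' theorem is much heavier machinery than the lemma needs, and your closing sketch of the elementary alternative is left vague; it can be made precise cleanly by working with the weighted quantities $L_g=\min_s\bigl(jv_p(b_s)+ks\bigr)$ and $L_h=\min_t\bigl(jv_p(c_t)+kt\bigr)$, noting $L_g+L_h=\min_i\bigl(jv_p(a_i)+ki\bigr)=jk$, and comparing with the bounds $L_g\le jv_p(b_0)$, $L_g\le ku$ (and likewise for $h$) to get $jv_p(b_0)=ku$, whence $j\mid u$ while $1\le u\le j-1$, a contradiction; that self-contained route is presumably close to the one in the cited source \cite{JSSK2021}.
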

\begin{proof}[\bf Proof of Theorem \ref{th:1}]
Let $g(z)=b_0+b_1z+\cdots+b_nz^n$  and $h(z)=c_0+c_1z+\cdots+c_{m-n}z^{m-n}$ for some $n$ with $0\leq n \leq m$. Since $f(z)=g(z)h(z)$, we may have
\begin{eqnarray*}
a_i &=& b_0c_i+b_1c_{i-1}+\cdots+b_{i-1}c_1+b_ic_0,~0\leq i\leq m,
\end{eqnarray*}
where we define $b_{n+i}=0=c_{m-n+i}$  for all $i\geq 1$. In particular, for $i=0$,  we have  $a_0=b_0c_0$, and since $p^k$ divides $a_0$, it follows that $p$ divides at least one of $b_0$ and $c_0$.

Suppose that $p$ does not divide $c_0$.  Then $p^k$ divides $b_0$. Since $p^k$ divides $a_i$ for each $i=1,\ldots, j-1$, and since $a_i=b_0c_i+b_1c_{i-1}+\cdots+b_ic_0$, it follows recursively that $p^k$ divides $b_i$ for each $i=0,\ldots,j-1$. Consequently, $p^k$ divides the sum $(b_0c_j+\cdots+b_{j-1}c_1)=a_j-b_jc_0$, and since $p$ does not divide any of $a_j$ or $c_0$, we find that $p$ does not divide $b_j$. Since $f$ is primitive, $p$ can't be a factor of $g$, and since $p$ divides $b_i$ for each $i=0,\ldots,j-1$, we must have $\deg(g)>(j-1)$. So, we must have $\deg(h)\leq (m-j)$.

Now suppose that $p$ divides $b_0$ and $p$ divides $c_0$. Then $k\geq 2$. If each of $g$ and $h$ is a nonconstant polynomial in $\mathbb{Z}[z]$, then by Lemma \ref{L1}, $p$ divides $a_j$, which contradicts the hypothesis of the theorem. So, one of $g$ and $h$ is a constant, which therefore must be a unit,  since $f$ is primitive. Thus, in this case $k_0=0$.
\end{proof}
\begin{proof}[\bf Proof of Theorem \ref{th:2}]
Let $f(z)=f_1(z)\cdots f_r(z)$ be a product of $r$ irreducible polynomials $f_1,\ldots,f_r\in \mathbb{Z}[z]$. Then $p^kd=|a_0|=|f(0)|=|f_1(0)|\cdots |f_r(0)|$, and so, $|f_i(0)|\geq 1$ for each $i$. If $\alpha_i\neq 0$ is the leading coefficient of $f_i$,  then we may write $f_i(z)=\alpha_i\prod_{\theta} (z-\theta)$, where the product runs over all zeros of $f_i$. We then have
\begin{eqnarray*}
|f_i(0)|=|\alpha_i|\prod_{\theta} |\theta|>|\alpha_i| d^{\deg (f_i)}\geq d,
\end{eqnarray*}
for each $i=1,\ldots,r$, which shows that $p$ divides $f_i(0)$ for each $i=1,\ldots,r$, and so, $r\leq k$.
If $k\leq j$, then $\min\{k,j\}=k\geq r$, and we are done.

So, assume that $k>j$. Suppose on the contrary that $r>j$. If we write $f_i=\sum_{t=0}^{\deg (f_i)}a_{it}z^t\in \mathbb{Z}[z]$, then we may have
\begin{eqnarray*}
a_j &=& \sum_{i_1+i_2+\cdots+i_r=j}a_{1i_1}a_{2i_2}\cdots a_{ri_r},
\end{eqnarray*}
where the indices under the summation satisfy $0\leq i_1,\ldots,i_r\leq j$. Since $r>j$, it follows that given any $r$-tuple $i_1,\ldots,i_r$ with $0\leq i_1,\ldots,i_r\leq j$ and $i_1+\cdots+i_r=j$, at least one of the indices $i_1,\cdots,i_r$ is equal to 0. Suppose that $i_t=0$ for some $t\in \{1,\ldots,r\}$. Then $f_t(0)=a_{t0}$, and so, $f_t(0)$ is a factor of $a_{1i_1}a_{2i_2}\cdots a_{ri_r}$. Since $p$ divides $f_i(0)$ for each $i=1,\ldots,r$, it follows that $p$ divides $\sum_{i_1+i_2+\cdots+i_r=j}a_{1i_1}a_{2i_2}\cdots a_{ri_r}=a_j$, which contradicts the hypothesis.
\end{proof}
\begin{proof}[\bf Proof of Theorem \ref{th:3}]
Let $f(z)=f_1(z)\cdots f_r(z)$ be the product of $r$ irreducible polynomials $f_1,\ldots,f_r\in \mathbb{Z}[z]$. Since $1\leq \min\{k,j\}$, we may assume without loss of generality that $r>1$. If $\alpha_i\neq 0$ denote the leading coefficient of $f_i$ for each $i$, then $\pm p^kd=a_m=\alpha_1\cdots \alpha_r$. Further, we may write $f_i(z)=\alpha_i\prod_{\theta}(z-\theta)$, where the product is over all zeros of $f_i$. We then have
\begin{eqnarray*}
|f_i(0)| &=& |\alpha_i|\prod_{\theta}|\theta|>  |\alpha_i|d^{\deg(f_i)},
\end{eqnarray*}
which shows that $|f_i(0)|/(|\alpha_i|d^{\deg(f_i)})>1$  for each $i$. Consequently, $|f_i(0)|>d$ for each $i$. Using these observations for each $i\in\{1,\ldots,r\}$ along with the hypothesis, we may arrive at the following:
\begin{eqnarray*}
|a_0/q| \leq |a_n|&<&|a_n|\Bigl(\frac{|f_1(0)|}{|\alpha_1|d^{\deg(f_1)}}\cdots \frac{|f_r(0)|}{|\alpha_r|d^{\deg(f_r)}}\Bigr)\times \frac{|\alpha_i|d^{\deg(f_i)}}{|f_i(0)|}
= |a_0|\frac{|\alpha_i|}{|f_i(0)|d^{m-\deg(f_i)}},
\end{eqnarray*}
which shows that $|\alpha_i|>|f_i(0)/q|d^{m-\deg(f_i)}\geq d^{m-\deg(f_i)}\geq d$, since $r>1$ enforces $(m-\deg(f_i))\neq 0$; $|f_i(0)/q|\geq 1$, since $|f_i(0)|$ is a nontrivial divisor of $|a_0|$ for each $i$ and $q$ is the smallest prime divisor of $|a_0|$. So, $|\alpha_i|>d$ for each $i$. This in view of the fact that $\pm p^k d=a_m=\alpha_1\cdots \alpha_r$ and that $p$ does not divide $d$ shows that $p$ must divide each $\alpha_i$. This proves that $r\leq k$.

Now on the contrary, assume that $r>j$.  We may write each $f_i=\sum_{t=1}^{\deg (f_i)}a_{it}z^t$ so that $\alpha_i=a_{i\deg(f_i)}$. Since $f(z)=f_1(z)\cdots f_r(z)$, we have $m=\sum_{i=1}^r \deg(f_i)$ and so,
\begin{eqnarray*}
a_{m-j}&=&\sum_{\sum_{t=1}^r i_t=m-j}a_{1i_1}\cdots a_{ri_r}=\sum_{\sum_{t=1}^r(\deg(f_t)-i_t)=j} a_{1i_1}\cdots a_{ri_r},
\end{eqnarray*}
where $0\leq i_t\leq \deg(f_t)$ and $0\leq \deg(f_t)-i_t\leq j$ for each index $i_t$. Since $r>j$, it follows that at least one of the numbers $\deg(f_1)-i_1,\cdots,\deg(f_r)-i_r$ must be equal to zero for every choice of the $r$-tuple $i_1,\ldots,i_r$. Thus for a given   $r$-tuple $i_1,\ldots,i_r$, there exists a $t\in \{1,\ldots,r\}$ for which $\deg(f_t)-i_t=0$, so that
$a_{ti_t}=a_{t\deg(f_t)}=\alpha_t$, the leading coefficient of $f_t$, which is divisible by $p$. Thus $p$ divides each term $a_{1i_1}\cdots a_{ri_r}$, and so, $p$ divides $\sum_{\sum_{t=1}^r i_t=m-j}a_{1i_1}\cdots a_{ri_r}=a_{m-j}$, which contradicts the hypothesis.
\end{proof}
To prove Theorem \ref{th:4}, we use the following result whose ideas were inherent in the paper \cite{JRG2023}.
\begin{lemma}\label{L2}
    If $g=a_0+a_1z+\cdots+a_mz^m\in \mathbb{Z}[z]$ is such that for some index $j\in \{0,\ldots,m-1\}$, $j$ zeros of $f$ lie within the open disk $|z|<1/|a_m|$ and $m-j$ zeros lie outside the closed disk $|z|\leq 1$, then $f$ is a product of at most $m-j$ irreducible factors in $\mathbb{Z}[z]$.
\end{lemma}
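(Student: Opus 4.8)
The plan is to fix an arbitrary factorization of $f$ into irreducibles and to show, by a root-counting argument in the spirit of Perron's criterion, that each irreducible factor must absorb at least one of the $m-j$ zeros lying outside the closed unit disk; since there are exactly $m-j$ such zeros, this bounds the number of factors. Concretely, write $f(z)=f_1(z)\cdots f_r(z)$ with each $f_i\in\mathbb{Z}[z]$ a nonconstant irreducible factor. Let $\alpha_i$ denote the leading coefficient of $f_i$ and $\beta_i=f_i(0)$ its constant term. Since $\prod_{i=1}^r\alpha_i=a_m$ is a product of nonzero integers, each $|\alpha_i|\leq|a_m|$; and since $f(0)=a_0\neq 0$ (as holds in the setting where the lemma is applied), each $\beta_i$ is a nonzero integer, so $|\beta_i|\geq 1$.

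First I would record the elementary geometric fact that the regions $|z|<1/|a_m|$ and $|z|>1$ are disjoint and together exhaust all $m$ zeros of $f$. Because $a_m$ is a nonzero integer we have $1/|a_m|\leq 1$, so by hypothesis every zero of $f$ lies in exactly one of these two regions. The same dichotomy then holds for the zeros of each factor $f_i$, which form a subset of the zeros of $f$.

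The heart of the argument is the claim that every $f_i$ has at least one zero in $|z|>1$. Suppose, to the contrary, that all zeros $\theta$ of some $f_i$ satisfy $|\theta|<1/|a_m|$. Writing $f_i(z)=\alpha_i\prod_\theta(z-\theta)$ and comparing its constant term with the product of the moduli of its roots, I would obtain
\[
|\beta_i|=|\alpha_i|\prod_\theta|\theta|<|\alpha_i|\,|a_m|^{-\deg f_i}\leq|a_m|^{\,1-\deg f_i}\leq 1,
\]
where the strict inequality uses that each root lies strictly inside the open disk $|z|<1/|a_m|$, the next step uses $|\alpha_i|\leq|a_m|$, and the final step uses $\deg f_i\geq 1$ together with $|a_m|\geq 1$. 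This contradicts $|\beta_i|\geq 1$ and proves the claim.

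To conclude, the zeros of $f$ lying in $|z|>1$ are partitioned among the factors $f_1,\ldots,f_r$, and by the claim each factor receives at least one of them; since there are exactly $m-j$ such zeros, we get $r\leq m-j$. The step I expect to require the most care is the displayed inequality chain: one must ensure the strict bound $\prod_\theta|\theta|<|a_m|^{-\deg f_i}$ is genuinely available (every root strictly interior to the open disk, not merely on its boundary), and verify that the crude estimate $|\alpha_i|\leq|a_m|$ — rather than exact divisibility of $a_m$ by $\alpha_i$ — already forces $|\beta_i|<1$, including the delicate borderline case $\deg f_i=1$, where $|a_m|^{\,1-\deg f_i}=1$ and the conclusion rests entirely on the strictness of the first inequality.
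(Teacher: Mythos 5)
Your proof is correct and is essentially the paper's own argument: the paper assumes $r>m-j$, uses pigeonhole to find a factor whose zeros all lie in $|z|<1/|a_m|$, and derives the same contradiction $1\leq |g_t(0)|=|\alpha_t|\prod_\theta|\theta|<|\alpha_t|\,|a_m|^{-\deg(g_t)}\leq 1$ that you use, merely phrased as contradiction-plus-pigeonhole rather than your direct count of factors against the $m-j$ exterior zeros. Your explicit flagging of the implicit assumption $a_0\neq 0$ (also tacit in the paper's proof, and guaranteed in the application within Theorem \ref{th:4}) is a fair observation but does not change the substance.
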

\begin{proof}Suppose that $g(z)=g_1(z)g_2(z)\cdots g_{r}(z)$ is the product of $r$ nonconstant irreducible polynomials $g_1,\ldots,g_r$ in $\mathbb{Z}[z]$, $1\leq r\leq m$. Then $1\leq |a_0|=|g(0)|=|g_1(0)|\cdots|g_r(0)|$, and so, $|g_i(0)|\geq 1$ for each $i=1,\ldots,r$. Assume on the contrary that $r>m-j\geq 1$. Then $r>1$ and there exists an index $t$ with $1\leq t\leq r$ such that all the zeros of $g_{t}$ lie inside the disk $|z|<1/|a_m|$. If $\alpha_{t}$ is the leading coefficient of $g_{t}$, then $|\alpha_t|\leq |a_m|$ . So, we may write  $g_{t}(z)=\pm \alpha_{t} \prod_{\theta}(z-\theta)$, where the product is over all zeros of $g_{t}$. We then have $1\leq |g_{t}(0)|=|\alpha_{t}|\prod_\theta |\theta|<|\alpha_{t}|\frac{1}{|a_m|^{\deg(g_{t})}}\leq 1$, which is a contradiction.
\end{proof}
\begin{proof}[\bf Proof of Theorem \ref{th:4}]
Define $g(z)=b^{m-1}f(z/b)$. Then we have
\begin{eqnarray*}
g&=&\sum_{0\leq i<j}a_ib^{m-1-i}z^i+(a_j b^{m-1-j}z^j)+ \sum_{j< i<m}a_ib^{m-1-i}z^i+(a_m/b)z^m.
\end{eqnarray*}
Since $b$ divides $|a_m|$, we must have $g\in \mathbb{Z}[z]$,  and $f$ and $g$ have same number of irreducible factors in $\mathbb{Z}[z]$. In view of this, it will be enough to prove the theorem for the polynomial $g$.
For any complex number $z$ satisfying  $|b/a_m|\leq |z|\leq 1$, we may have the following on using the hypothesis
\begin{eqnarray*}
|g(z)| &\geq & b^{m-1-j}|z|^j\Bigl(|a_j|-\sum_{0\leq i<j}|a_i|b^{j-i}\frac{1}{|z|^{j-i}}-\sum_{j<i\leq m}|a_i|b^{-(i-j)}|z|^{i-j}\Bigr)\\
 &\geq & b^{m-1-j}||b/a_m|^{j}\Bigl(|a_j|-\sum_{0\leq i<j}|a_i|b^{j-i}\frac{|a_m|^{j-i}}{b^{j-i}}-
 \sum_{j<i\leq m}|a_i|b^{-(i-j)}\Bigr)\\
 &=& \frac{b^{m-1}}{|a_m|^{j}}\Bigl(|a_j|-\sum_{0\leq i<j}|a_i||a_m|^{j-i}-\sum_{j<i\leq m}|a_i|b^{-(i-j)}\Bigr)\\
 &\geq & \frac{b^{m-1}}{|a_m|^{j}}\Bigl(|a_j|-\sum_{0\leq i<j}|a_i||a_m|^{j-i}-\sum_{j<i\leq m}|a_i|\delta^{i-j}\Bigr)>0,\\
 \end{eqnarray*}
which shows that each zero $\theta$ of $g$ satisfies $|\theta|<b/|a_m|\leq 1$ or $|\theta|>1$. Observe that for $|z|=1$,
we have
\begin{eqnarray*}
|a_j|b^{m-1-j}|z^j|=|a_j|b^{m-1-j}&>&b^{m-1-j}\Bigl(\sum_{0\leq i<j}|a_i||a_m|^{j-i}+\sum_{j<i\leq n}|a_i|b^{-(i-j)}\Bigr)\\
&\geq &b^{m-1-j}\Bigl(\sum_{0\leq i<j}|a_i|b^{j-i}+\sum_{j<i\leq n}|a_i|b^{-(i-j)}\Bigr)\\
&= &\sum_{0\leq i<j}|a_i|b^{m-1-i}+\sum_{j<i\leq n}|a_i|b^{m-1-i}\\
&\geq& |g(z)-a_jb^{m-1-j}z^j|_{|z|=1},
\end{eqnarray*}
where the polynomial $a_j b^{m-1-j}z^j$ has exactly $j$ zeros within the interior of the unit circle $|z|=1$. This in view of  Rouch\'e's theorem  tells us that there are exactly $j$ zeros of $g$ inside the disk $|z|<|b/a_m|\leq 1$. Thus the remaining  $m-j$ zeros of $g$ must lie outside of the closed disk $|z|\leq 1$. By Lemma \ref{L2}, the polynomial $g$ is a product of at most $m-j$ irreducible polynomials in $\mathbb{Z}[z]$.
\end{proof}


\subsection*{Disclosure statement}
The authors report there are no competing interests to declare.
\bibliographystyle{ams}

\begin{thebibliography}{8}
\bibitem {Cox} D. A. Cox, Why Eisenstein proved the Eisenstein criterion
and why Sch\"onemann discovered it first, \emph{Amer. Math. Monthly}  {\bf{118}}:1, (2011), 3--21.
\bibitem{E} G. Eisenstein, \"Uber die irreduzibilitat und einige andere eigenschaften der gleichungen, \emph{J. Reine Angew. Math.} 39 (1850), 160--179.
\bibitem{D} G. Dumas, Sur quelques cas d'irr\'eductibilit\'e des polynomes \'a coefficients rationnels, \emph{J. Math. Pure Appl.} 2 (1906), 191--258.
\bibitem{Bonciocat0} A. I. Bonciocat and N. C. Bonciocat, On the irreducibility of polynomials with leading coefficient divisible by a large prime power, \emph{Amer. Math. Monthly.} {\bf{116}}:8 (2009), 743--745.
\bibitem{Bonciocat2} N. C. Bonciocat, On an irreducibility criterion of Perron  for multivariate polynomials,
\emph{Bulletin math\'ematique de la Soci\'et\'e des Sciences Math\'ematiques de Roumanie, Nouvelle S\'erie} \textbf{53}(101), No. 3 (2010), 213--217.
%

\bibitem{Perron} O. Perron, Neue kriterien f\"ur die irreduzibilit\"at algebraischer  gleichungen, \emph{J. Reine Angew. Math.} 132 (1907), 288--307.

%
%

\bibitem{JRG2023} J. Singh and R. Garg, {{A note on Perron's irreducibility criterion}}, \emph{Arch. Math.} \textbf{121} (2023), 33--38.
\bibitem{JSSK2021} J. Singh and S. Kumar, A new class of irreducible polynomials, \emph{Comm. Alg.} \textbf{49}:6  (2021), 2722--2727. 
\bibitem{SKJS2023} S. Kumar and J. Singh, A study of some recent irreducibility criteria for polynomials having integer coefficients, arxiv Preprint: \url{https://arxiv.org/abs/2310.02860} pp1--20.
\bibitem {W} S. H. Weintraub, A mild generalization of Eisenstein's criterion, \emph{Proc. Amer. Math. Soc.} {\bf{141}}:4, (2013), 1159--1160.
\bibitem{S} T. Sch\"onemann, Von denjenigen Moduln, welche Potenzen von Primzahlen sind, \emph{J. Reine Angew. Math.} 32 (1846), 93--105.
\end{thebibliography}

\end{document}